\documentclass{amsart}
\usepackage{geometry,amssymb,amsmath}
\usepackage{color} 
 
\geometry{twoside,
 a4paper,
 textheight = 20cm,
 textwidth = 15cm,
 centering,
 marginparwidth = 1cm,
 top = 3.0cm
}
 

\newcommand{\refe}[1]{(\ref{#1})}
\newcommand{\dst}{\displaystyle}
\newcommand{\RR}{{\mathbb R} }

\newcommand{\CC}{{\mathbb C}}

\renewcommand{\u}{\mbox{${\textbf{u}}$}}
\newcommand{\newu}{\mbox{$\widetilde{\textbf{u}}$}}

\newcommand{\K}{\mathrm{K}}
\newcommand{\Ku}{\mathrm{K}^{\alpha,\beta}}

\newcommand{\tilG}{\widetilde{\Gamma}_q}

\newcommand{\bq}{\begin{equation}}
\newcommand{\eq}{\end{equation}}
\newcommand{\ba}{\begin{array}}
\newcommand{\ea}{\end{array}}

\newtheorem{theorem}{Theorem}


\begin{document}

\title{On the limit of non-standard $q$-Racah polynomials}

\author{R. \'{A}lvarez-Nodarse}
\address{IMUS \& Departamento de An\'alisis Matem\'atico, Universidad de Sevilla.
Apdo. 1160, E-41080  Sevilla, Spain}\email{ran@us.es}

\author{R. Sevinik Ad\i g{\"{u}}zel}
\address{Department of Mathematics, Faculty of Science, Sel\c{c}uk University, 42075,
Konya, Turkey}\email{sevinikrezan@gmail.com}

\keywords{q-Racah polynomials, big $q$-Jacobi polynomials, dual $q$-Hahn polynomials, $q$-Hahn polynomials, limit relations}
\subjclass[2000]{33D45, 33C45, 42C05}


\begin{abstract}
The aim of this article is to study the limit transitions
from non-standard $q$-Racah polynomials to big $q$-Jacobi, dual $q$-Hahn, and 
$q$-Hahn polynomials such that the orthogonality properties and the
three-term recurrence relations remain valid. 
\end{abstract}

\maketitle

\section{Introduction}
The Askey-Wilson polynomials and $q$-Racah polynomials are the
most general classical orthogonal families from which all the other 
$q$-hypergeometric orthogonal polynomials can be obtained
by (possibly successive) limit transitions. 
There are several ways of getting these limits, but must of
them are not good enough by means of the orthogonality property
or the three term recurrence relation as it was pointed out by
Koornwinder in \cite{TK} for the case of $q$-Racah polynomials and 
big $q$-Jacobi polynomials. In fact, in \cite{TK} the author  studied 
the limit relation from the standard $q$-Racah polynomials defined
on the lattice \cite[page 422]{ks} $x(s)=q^{-s}+\gamma\delta q^{s+1}$ 
to the big $q$-Jacobi polynomials such that the orthogonality properties 
remain valid.

In this article, we introduce some limit formulas from the non-standard 
$q$-Racah polynomials $u_n^{\alpha,\beta}(x(s),a,b)$ \cite{RYR,NSU} 
defined on the lattice $x(s)=[s]_q[s+1]_q$ where $[s]_q$ are the symmetric 
$q$-numbers 
$$
[s]_q=\dst\frac{q^{s/2}-q^{-s/2}}{q^{1/2}-q^{-1/2}}, \quad s\in\mathbb{C},
$$ 
to the big $q$-Jacobi polynomials. 
Let us point out that the lattice for the non-standard polynomials is more appropriate 
for numerical analysis since it does not depend on any parameters  of the polynomials. 

Furthermore, we  consider the similar limit properties between the non-standard 
$q$-Racah-Krall-type polynomials \cite{RR} and the big $q$-Jacobi-Krall-type  polynomials
\cite{RC}. The Krall-type polynomials are polynomials which are orthogonal with respect to a
linear functional $\newu$ obtained from a
quasi-definite functional $\u:\mathbb{P}\mapsto\CC$  ($\mathbb{P}$, denotes the space
of complex polynomials with complex coefficients) via the addition of delta Dirac measures, 
i.e., $\newu$ is the linear functional 
$$\newu=\u+\sum_{k=1}^N A_k\delta_{x_k}, 
$$
where $A_k\in\RR$, $x_1,\ldots,x_k \in \mathbb{R}$
and $\delta_a$ is the  delta Dirac functional at the point $a$, i.e., 
$\langle \delta_a, p\rangle=p(a)$, where $p\in\mathbb{P}$.  

These kind of modifications firstly appeared as eigenfunctions of a fourth order 
linear differential operator with polynomial coefficients
that do not depend on the degree of the polynomials (see \cite{hkrall} or the 
more recent reviews \cite{a-nmp} and \cite[chapter XV]{akrall}).

Our main aim in this note is to obtain the limit formulas between non-standard 
$q$-Racah polynomials and big $q$-Jacobi, dual $q$-Hahn, and $q$-Hahn polynomials, respectively,
as well as between the corresponding Krall-type polynomials. In fact, the explicit limits 
from $q$-Racah polynomials to these other families are given in 
\cite[(14.2.15), (14.2.19) and (14.2.16,17,18)]{ks}, respectively. However, the formula
\cite[(14.2.15)]{ks}, for example, is not valid for getting the 
orthogonality of the resulting polynomials as it is pointed out by 
Koornwinder in \cite{TK}. In \cite{TK} Koornwinder proposed 
a more convenient limit formula that allows him to show that the support 
of the measure for the $q$-Racah polynomials transforms into the 
measure of the big $q$-Jacobi polynomials. However, in \cite{TK} it is not shown how the orthogonality 
relation and the three-term recurrence relation (TTRR) of the  $q$-Racah polynomials
transform into the big $q$-Jacobi ones. We fill this gap in this paper but using the aforementioned
non-standard $q$-Racah polynomials.

Following \cite{TK}, we present an alternative limit formula (to the one
suggested in \cite{NU93}) from the non-standard $q$-Racah to big $q$-Jacobi 
polynomials and from the non-standard $q$-Racah-Krall-type polynomials to big $q$-Jacobi-Krall-type 
polynomials such that the orthogonality property remains present. 
In particular, we show that by taking a proper limit, not 
only the polynomials $u_n^{\alpha,\beta,A,B}(x(s),a,b)$ and 
$u_n^{\alpha,\beta}(x(s),a,b)$ become into 
$P^{\alpha,\beta,A,B}_{n}(x,\widetilde{a},\widetilde{b},\widetilde{c};q)$ and $P^{\alpha,\beta}_{n}(x,\widetilde{a},\widetilde{b},\widetilde{c};q)$,
but also that the orthogonality relation and the three-term recurrence relation
(TTRR) of the  $q$-Racah polynomials become into the ones of big $q$-Jacobi polynomials.

The structure of the paper is as follows. 
We start by introducing some preliminary results 
on the $q$-Hahn polynomials and on the non-standard 
$q$-Racah polynomials and big $q$-Jacobi and Krall-type polynomials obtained 
via the addition of two mass points to the weight function of the 
these two polynomials in the forthcoming section 2. 
In section 3, we deal with the limit relation between the non-standard $q$-Racah polynomials and big $q$-Jacobi polynomials in detail. 
In section 4 
we consider the limits from the non-standard $q$-Racah polynomials 
to dual $q$-Hahn and $q$-Hahn polynomials, respectively.
Finally, in section 5 the limit between the corresponding Krall-type families are considered.

\section{Preliminary}
Here we include some properties of the non-standard $q$-Racah polynomials,
non-standard $q$-Racah-Krall-type polynomials with two mass points, big $q$-Jacobi polynomials, 
and $q$-Hahn monic polynomials. In the following, and throughout  
the paper, we denote $\kappa_q$ by the quantity 
$$
\kappa_q:=q^{1/2}-q^{-1/2}.
$$

The  non-standard monic $q$-Racah polynomials are defined by the following hypergeometric representation \cite{RYR,NSU} 
(for the definition and properties of basic series see \cite{GR})
\begin{equation}\label{q-racah}
\begin{split}
u^{\alpha,\beta}_n(s)_q:=u^{\alpha,\beta}_{n}(\mu(s),a,b)_q & = q^{-\frac n2(2a+1)}
\frac{(q^{a-b+1},q^{\beta+1},q^{a+b+\alpha+1};q)_n}
{\kappa_q^{2n}(q^{\alpha+\beta+n+1};q)_n}\times\\
&{}_{4}\varphi_3 \left(\ba{c} q^{-n},q^{\alpha+\beta+n+1}, q^{a-s},
q^{a+s+1}  \\ q^{a-b+1},q^{\beta+1},q^{a+b+\alpha+1}  \ea
\,\bigg|\, q \,,\, q \right),
\end{split}
\end{equation}
which are polynomials on the $q$-quadratic lattice of the form
\bq\label{qracahlattice}
\mu(s)=[s]_q[s+1]_q=c_1(q^s+q^{-s-1})+c_3,\quad 
c_1=q^{1/2}\kappa_q^{-2},\,\, c_3=-q^{-1/2}(1+q)\kappa_q^{-2}.
\eq
For $-\frac 12<a\leq b-1$, $\alpha>-1$, $-1<\beta<2a+1$, 
they satisfy the orthogonality relation ($b-a=N\in\mathbb{N}$)
\bq\label{ort-umod}
\begin{split}
\sum_{s = a }^{b-1} {u}_n^{\alpha, \beta}(s)_q {u}_m^{\alpha, \beta}(s)_q 
\rho(s)\Delta \mu(s-\mbox{$\frac 12$})= \delta_{n,m} d_n^2, \quad \Delta 
\mu(s-\mbox{$\frac 12$})=[2s+1]_q, 
\end{split}
\eq
where 
\[
d_n^2\!=\!\frac{(q; q)_n(q^{\alpha+1}; q)_n(q^{\beta+1}; q)_n(q^{b-a+\alpha+\beta+1}; q)_n
(q^{a+b+\alpha+1}; q)_n(q^{a-b+1}; q)_n(q^{\beta-a-b+1}; q)_n}{(q^{\frac12}-q^{-\frac12})^{4n}
(q^{\alpha+\beta+2}; q)_{2n}
(q^{\alpha+\beta+n+1}; q)_n},
\]
and $\rho$ is the weight function
(see Table 1\footnote{We have chosen $\rho(s)$ in
such a way that $\sum_{x = a }^{b-1} \rho(s)\mu(s-\mbox{$\frac 12$})=1$, i.e., to be a probability measure.} in \cite{RR}) of the non-standard $q$-Racah polynomials
\bq\label{qracahrho}
\begin{split}
\rho(s)= & \frac{\tilG(s\!+\!a\!+\!1) \tilG(s\!-\!a\!+\!\beta\!+\!1)
\tilG(s\!+\!\alpha\!+\!b\!+\!1)\tilG(b\!+\!\alpha\!-\!s)\tilG(\alpha\!+\!\beta\!+\!2)
}{ \tilG(s\!-\!a\!+\!1) \tilG(s\!+\!b\!+\!1)
\tilG(s\!+\!a\!-\!\beta\!+\!1)\tilG(b\!-\!s)\tilG(\alpha\!+\!1)\tilG( \beta\!+\!1)}\\
&\times\frac{\tilG(b\!-\!a) \tilG(a\!+\!b\!-\!\beta)}{\tilG(b\!-\!a\!+\!\alpha\!+\!\beta\!+\!1)\tilG(a\!+\!b\!+\!\alpha\!+\!1)},
\end{split}
\eq
where $\tilG(x)$, introduced in \cite[Eq. (3.2.24)]{NSU}, is related to the classical
$q$-Gamma function, $\Gamma_q$, \cite{ks} by formula 
$$
\tilG(s)= q^{-\frac{(s-1)(s-2)}{4}}\Gamma_q(s)=q^{-\frac{(s-1)(s-2)}{4}}
(1-q)^{1-s}\frac{(q;q)_\infty}{(q^s;q)_\infty},\quad 0<q<1.
$$
The non-standard $q$-Racah polynomials satisfy the TTRR \cite{RYR}
\begin{equation}\label{q-racahTTRR}
\mu(s)u_n^{\alpha,\beta}(s)_q=u_{n+1}^{\alpha,\beta}(s)_q+\beta_nu_{n}^{\alpha,\beta}(s)_q
+\gamma_nu_{n-1}^{\alpha,\beta}(s)_q,
\end{equation} 
\begin{equation*}
\begin{split}
\beta_n\!=& \![a]_q[a+1]_q\!-\!\frac{[\alpha+\beta+n+1]_q[a-b+n+1]_q[\beta+n+1]_q[a+b+\alpha+n+1]_q}
{[\alpha+\beta+2n+1]_q[\alpha+\beta+2n+2]_q}\\
&-\frac{[\alpha+n]_q[b-a+\alpha+\beta+n]_q[-a-b+\beta+n]_q[n]_q}
{[\alpha+\beta+2n]_q[\alpha+\beta+2n+1]_q},\\
\gamma_n=&\frac{[n]_q[\alpha+\beta+n]_q[\alpha+n]_q[\beta+n]_q[b-a+\alpha+\beta+n]_q[-a-b+\beta+n]_q}{[\alpha+\beta+2n-1]_q([\alpha+\beta+2n]_q)^2[\alpha+\beta+2n+1]_q}\\
&\times[a-b+n]_q[a+b+\alpha+n]_q.
\end{split}
\end{equation*}

 The $q$-Racah-Krall-type polynomials with two mass points are orthogonal with respect to a
linear functional $\newu$ obtained from a
quasi-definite functional $\u:\mathbb{P}\mapsto\CC$  ($\mathbb{P}$, denotes the space
of complex polynomials with complex coefficients) via the addition of delta Dirac measures, 
i.e., $\newu$ is the linear functional 
$$\newu=\u+A\delta_{a}+B\delta_{b-1}, 
$$
where $A,B\in\RR$, 
$\delta_a$ and $\delta_{b-1}$ are the  delta Dirac functionals at the point $a$ and $b-1$, i.e., 
$\langle \delta_a, p\rangle=p(a)$ and $\langle \delta_{b-1}, p\rangle=p(b-1)$, where $p\in\mathbb{P}$ and the kernel
\begin{equation}\label{rez13}
\begin{array}{l}
\K_n^{\alpha,\beta}(s_1, s_2):=\dst\sum_{k=0}^{n}{\dst\frac{u_k^{\alpha,\beta}(s_1)_qu_k^{\alpha,\beta}(s_2)_q}{d_k^2}}=\dst\frac{\alpha_n}{d_n^2}
\frac{u_{n+1}^{\alpha,\beta}(s_1)_q u_n^{\alpha,\beta}(s_2)_q-u_{n+1}^{\alpha,\beta}(s_2)_qu_n^{\alpha,\beta}(s_1)_q}
{x(s_1)-x(s_2)}.
\end{array}
\end{equation}
They satisfy the following orthogonality
relation
\bq\label{ort-umodi}
\begin{split}
\sum_{s = a }^{b-1} {u}_n^{\alpha, \beta, A,B}(s)_q {u}_m^{\alpha, \beta, A,B}(s)_q &
\rho(s)[2s+1]_q  + A {u}_n^{\alpha, \beta, A,B}(a)_q
{u}_m^{\alpha, \beta, A,B}(a)_q \\ & +B{u}_n^{\alpha, \beta, A,B}(b\!-\!1)_q
{u}_m^{\alpha, \beta, A,B}(b\!-\!1)_q = \delta_{n,m} (d_n^{A,B})^2,
\end{split}
\eq
where $\rho$ is the non-standard $q$-Racah weight function (see Table 1 in \cite{RR}\footnote{We have chosen $\rho(s)$ in such a way that
$\sum_{s = a }^{b-1} \rho(s)[2s+1]_q=1$, i.e., to be a probability measure.}). 
They can be written as \cite{RR} 
\begin{equation}
u_n^{\alpha,\beta,A,B}(s)_q=u_n^{\alpha,\beta}(s)_q-Au_n^{\alpha,\beta,A,B}(a)_q\K_{n-1}^{\alpha,\beta}(s,a)-Bu_n^{\alpha,\beta,A,B}(b-1)_q\K_{n-1}^{\alpha,\beta}(s,b-1).
\label{repforqracah}
\end{equation}
Moreover,  the following expressions for the
 the values at the points $s=a$ and $s=b-1$ and the norm $(d_n^{A,B})^2$ of the modified
polynomials ${u}_n^{\alpha, \beta, A,B}(s)_q$, respectively, hold
\begin{equation}\label{u-u2m}
\begin{array}{l}\dst
{u}_n^{\alpha, \beta, A, B}(a)_q=\frac{(1+B \Ku_{n-1}(b\!-\!1, b\!-\!1))u_n^{\alpha, \beta}(a)_q-
B\Ku_{n-1}(a, b\!-\!1)u_n^{\alpha, \beta}(b\!-\!1)_q}{\kappa_{n-1}^{\alpha, \beta}(a,b\!-\!1)},\\
{u}_n^{\alpha, \beta, A, B}(b\!-\!1)_q=\dst
\frac{-A\Ku_{n-1}(b\!-\!1, a)u_n^{\alpha, \beta}(a)_q+(1+A\Ku_{n-1}(a, a))u_n^{\alpha, \beta}(b\!-\!1)_q}
{\kappa_{n-1}^{\alpha, \beta}(a,b\!-\!1)},
\end{array}
\end{equation}\begin{small}
\begin{equation*}\label{dn-u2m}
\begin{split}
(d_n^{A,B})^2  =
d_n^2 &\! + \!\frac{A ({u}_n^{\alpha, \beta}(a)_q)^2 \{1\!+\! B\Ku_{n-1}(b\!-\!1,b\!-\!1)\}  +
B({u}_n^{\alpha, \beta}(b\!-\!1)_q)^2 \{ 1\!+\! A\Ku_{n-1}(a,a) \} }{\kappa_{n-1}^{\alpha, \beta}(a, b\!-\!1)} \\
& -\frac{2 AB {u}_n^{\alpha, \beta}(a)_q {u}_n^{\alpha, \beta}(b\!-\!1)_q\Ku_{n-1}(a,b\!-\!1)}
{\kappa_{n-1}^{\alpha, \beta}(a, b\!-\!1)},
\end{split}
\end{equation*}\end{small}
where
\begin{equation}\label{K(a,b)}\begin{split}
\kappa_{m}^{\alpha, \beta}(s,t)& =1+A\Ku_{m}(s,s)+ B\Ku_{m}(t,t)\\
& +AB\left\{\Ku_{m}(s,s)\Ku_{m}(t,t)-(\Ku_{m}(s,t))^2\right\},\end{split}
\end{equation}
where $\K_{m}^{\alpha, \beta}(s,t)$ are the kernels defined by \refe{rez13}
and  $d_n^2$ denotes the squared norm of the $n$-th non-standard $q$-Racah polynomials
(see Table 1 in \cite{RR}).

The non-standard $q$-Racah-Krall-type polynomials satisfy the TTRR \cite{RR}
\begin{equation}\label{q-racahmodTTRR}
\mu(s)u_n^{\alpha,\beta,A,B}(s)_q=\alpha_n^{A,B}u_{n+1}^{\alpha,\beta,A,B}(s)_q+\beta_n^{A,B}u_{n}^{\alpha,\beta,A,B}(s)_q
+\gamma_n^{A,B}u_{n-1}^{\alpha,\beta,A,B}(s)_q,
\end{equation} 
\begin{small}
\begin{equation*}
\begin{split}
\alpha_n^{A,B} &= 1, \\
\beta_n^{A,B} &=   \beta_n
-A \left(\frac{{u}_n^{\alpha, \beta, A,B}(a)_q u_{n-1}^{\alpha, \beta }(a)_q}{{d}_{n-1}^2}-
\frac{{u}_{n+1}^{\alpha, \beta, A,B}(a)_qu_{n}^{\alpha, \beta }(a)_q }{{d}_{n}^2}\right) \\
&-B \left(\frac{{u}_n^{\alpha, \beta, A,B}(b-1)_q u_{n-1}^{\alpha, \beta }(b-1)_q }{{d}_{n-1}^2}-
\frac{{u}_{n+1}^{\alpha, \beta, A,B}(b-1)_q u_{n}^{\alpha, \beta }(b-1)_q }{{d}_{n}^2}\right), \\
\gamma_n^{A,B} &= \gamma_n\frac{1\!+\!\Delta_n^{A,B}}{1\!+\!\Delta_{n-1}^{A,B}},\,\, \Delta_n^{A,B}=
\frac{ A {u}_n^{\alpha, \beta, A,B}(a)_q u_{n}^{\alpha, \beta }(a)_q }{{d}_{n}^2}\!+\!
\frac{B {u}_n^{\alpha, \beta, A,B}(b\!-\!1)_q u_{n}^{\alpha, \beta }(b\!-\!1)_q}{{d}_{n}^2}.
\end{split}
\end{equation*}
\end{small}


The monic big $q$-Jacobi polynomials are defined by the following basic series \cite{ks}
\begin{equation}\label{bigqjacobi}
\begin{split}
P^{\alpha,\beta}_{n}(z,\widetilde{a},\widetilde{b},\widetilde{c};q) & = (-\widetilde{a})^nq^{\frac{n(n+1)}{2}}\frac{(q\widetilde{b},q\widetilde{c};q)_n}
{(\widetilde{a}\widetilde{b}q^{n+1};q)_n}
{}_{3}\varphi_2 \left(\!\!\!\ba{c} q^{-n},\widetilde{a}\widetilde{b}q^{n+1}, q\widetilde{c}z^{-1}
\\ q\widetilde{b},q\widetilde{c}  \ea
\,\bigg|\, q \,,\, \frac{z}{\widetilde{a}} \!\!\right).
\end{split}
\end{equation}
They satisfy the orthogonality relation
\begin{equation}\label{bigqjacobiorth}
\begin{split}
&\int_{\widetilde{c}q}^{\widetilde{a}q}
P_m(z;\widetilde{a}, \widetilde{b}, \widetilde{c}; q)P_n(z; \widetilde{a}, \widetilde{b}, \widetilde{c}; q)\widetilde{\rho}(z)
d_q z=\int_{\widetilde{c}q}^0[.]d_qz+\int_{0}^{\widetilde{a}q}[.]d_qz\\
&=(1-q)(-\widetilde{c})\sum_{s=0}^{\infty}P_n(\widetilde{c}q^{s+1})P_m(\widetilde{c}q^{s+1})\widetilde{\rho}(\widetilde{c}q^{s+1})q^{s+1}\\
&+(1-q)\widetilde{a}\sum_{s=0}^{\infty}P_n(\widetilde{a}q^{s+1})P_m(\widetilde{a}q^{s+1})\widetilde{\rho}(\widetilde{a}q^{s+1})q^{s+1}
= \widetilde{d}_n^2\delta_{mn}
\end{split}
\end{equation}
for $0<q\widetilde{a}<1, 0\leq q\widetilde{b}<1, \widetilde{c}<0$, where
\begin{equation}\label{rhodn}
\begin{split}
\widetilde{\rho}(z)&=\frac{(\widetilde{a}^{-1}z, \widetilde{c}^{-1}z; q)_{\infty}}{(z, \widetilde{b}\widetilde{c}^{-1}z;q)_{\infty}}
\frac{(\widetilde{a}q, \widetilde{b}q, \widetilde{c}q, \widetilde{a}\widetilde{b}\widetilde{c}^{-1}q; q)_{\infty}}
{\widetilde{a}q(1-q)(q, \widetilde{a}\widetilde{b}q^2, \widetilde{a}^{-1}\widetilde{c}, \widetilde{a}\widetilde{c}^{-1}q; q)_{\infty}},\\
\widetilde{d}_n^2&=\frac{1-\widetilde{a}\widetilde{b}q}{1-\widetilde{a}\widetilde{b}q^{2n+1}}\frac{(q, \widetilde{b}q, \widetilde{a}q,\widetilde{c}q, \widetilde{a}\widetilde{b}\widetilde{c}^{-1}q; q)_n}{(\widetilde{a}\widetilde{b}q, \widetilde{a}\widetilde{b}q^{n+1},\widetilde{a}\widetilde{b}q^{n+1}; q)_n}(-\widetilde{a}\widetilde{c}q^2)^nq^{(^n _2)},
\end{split}
\end{equation}
and $\int_{0}^{t} f(z)d_qz$ is the $q$-Jackson integral \cite{GR}.

The monic big $q$-Jacobi polynomials satisfy the following TTRR  
\begin{equation*}
z P_n(z,\widetilde{a},\widetilde{b},\widetilde{c};q)=P_{n+1}(z,\widetilde{a},\widetilde{b},\widetilde{c};q)+
\widetilde{\beta}_nP_{n}(z,\widetilde{a},\widetilde{b},\widetilde{c};q)
+\widetilde{\gamma}_nP_{n-1}(z,\widetilde{a},\widetilde{b},\widetilde{c};q)
\end{equation*} 
where 
\begin{equation}\label{tildebetan}
\begin{split}
\widetilde{\beta}_n&\!=\!1\!\!-\!\frac{(1-\widetilde{a}q^{n+1})(1-\widetilde{a}\widetilde{b}q^{n+1})(1-\widetilde{c}q^{n+1})}
{(1-\widetilde{a}\widetilde{b}q^{2n+1})(1-\widetilde{a}\widetilde{b}q^{2n+2})}
\!+\!\widetilde{a}\widetilde{c}q^{n+1}\frac{(1-q^{n})(1-\widetilde{a}\widetilde{b}\widetilde{c}^{-1}q^{n})(1-\widetilde{b}q^{n})}{(1-\widetilde{a}\widetilde{b}q^{2n})(1-\widetilde{a}\widetilde{b}q^{2n+1})},\\
\widetilde{\gamma}_n&=-\widetilde{a}\widetilde{c}q^{n+1}\frac{(1-q^{n})(1-\widetilde{a}q^{n})(1-\widetilde{b}q^{n})(1-\widetilde{c}q^{n})(1-\widetilde{a}\widetilde{b}q^{n})(1-\widetilde{a}\widetilde{b}\widetilde{c}^{-1}q^{n})}
{(1-\widetilde{a}\widetilde{b}q^{2n-1})(1-\widetilde{a}\widetilde{b}q^{2n})^2(1-\widetilde{a}\widetilde{b}q^{2n+1})}.
\end{split}
\end{equation}  

The big $q$-Jacobi-Krall-type polynomials with two mass points  satisfy the orthogonality
relation
\begin{equation}\label{bigqjacobimodiorth}
\begin{split}
&\int_{\widetilde{c}q}^{\widetilde{a}q}
P_m^{A,B}(z;\widetilde{a}, \widetilde{b}, \widetilde{c}; q)P_n^{A,B}(z; \widetilde{a}, \widetilde{b}, \widetilde{c}; q)\widetilde{\rho}(z)
d_q z + A P_n^{A,B}(\widetilde{c}q;\widetilde{a}, \widetilde{b}, \widetilde{c}; q)
P_m^{A,B}(\widetilde{c}q;\widetilde{a}, \widetilde{b}, \widetilde{c}; q) \\ & +BP_n^{A,B}(\widetilde{a}q;\widetilde{a}, \widetilde{b}, \widetilde{c}; q)_q
P_m^{A,B}(\widetilde{a}q;\widetilde{a}, \widetilde{b}, \widetilde{c}; q) =(1-q)(-\widetilde{c})\sum_{s=0}^{\infty}P_n^{A,B}(\widetilde{c}q^{s+1})P_m^{A,B}(\widetilde{c}q^{s+1})\widetilde{\rho}(\widetilde{c}q^{s+1})q^{s+1}\\
&+(1-q)\widetilde{a}\sum_{s=0}^{\infty}P_n^{A,B}(\widetilde{a}q^{s+1})P_m^{A,B}(\widetilde{a}q^{s+1})\widetilde{\rho}(\widetilde{a}q^{s+1})q^{s+1}
+ A P_n^{A,B}(\widetilde{c}q;\widetilde{a}, \widetilde{b}, \widetilde{c}; q)
P_m^{A,B}(\widetilde{c}q;\widetilde{a}, \widetilde{b}, \widetilde{c}; q) \\
&+BP_n^{A,B}(\widetilde{a}q;\widetilde{a}, \widetilde{b}, \widetilde{c}; q)
P_m^{A,B}(\widetilde{a}q;\widetilde{a}, \widetilde{b}, \widetilde{c}; q)
= \delta_{n,m} (\widetilde{d}_n^{A,B})^2
\end{split}
\end{equation}
where $\widetilde{\rho}(z)$ is the big $q$-Jacobi weight function (see Table 1 in \cite{RC}\footnote{We have chosen $\widetilde{\rho}(z)$ in such a way that
$\int_{s = \widetilde{c}q }^{\widetilde{a}q} \widetilde{\rho}(z)d_qz=1$, i.e., to be a probability measure.})
and
\begin{equation}\label{norm}
\begin{array}{l}\dst
P_n^{A,B}(\widetilde{c}q;\widetilde{a}, \widetilde{b}, \widetilde{c}; q)=\frac{(1+B \K_{n-1}(\widetilde{a}q, \widetilde{a}q))P_n(\widetilde{c}q;\widetilde{a}, \widetilde{b}, \widetilde{c}; q)-
B\K_{n-1}(\widetilde{c}q, \widetilde{a}q)P_n(\widetilde{a}q;\widetilde{a}, \widetilde{b}, \widetilde{c}; q)}{\kappa_{n-1}(\widetilde{c}q,\widetilde{a}q)},\\
P_n^{A,B}(\widetilde{a}q;\widetilde{a}, \widetilde{b}, \widetilde{c}; q)=\dst
\frac{-A\K_{n-1}(\widetilde{a}q, \widetilde{c}q)P_n(\widetilde{c}q;\widetilde{a}, \widetilde{b}, \widetilde{c}; q)+(1+A\K_{n-1}(\widetilde{c}q, \widetilde{c}q))P_n(\widetilde{c}q;\widetilde{a}, \widetilde{b}, \widetilde{c}; q)}
{\kappa_{n-1}(\widetilde{c}q,\widetilde{a}q)},
\end{array}
\end{equation}\begin{small}
where 
\begin{equation}\label{kerbigqjac}
\begin{array}{l}
\K_n(z_1, z_2):=\dst\sum_{k=0}^{n}{\dst\frac{P_n(s_1,\widetilde{a},\widetilde{b},\widetilde{c};q)P_n(s_2,\widetilde{a},\widetilde{b},\widetilde{c};q)}{\widetilde{d}_k^2}},
\end{array}
\end{equation}
are the corresponding kernels. Moreover,
\begin{equation*}
\begin{split}
(\widetilde{d}_n^{A,B})^2  =
\widetilde{d}_n^2 &\! + \!\frac{A (P_n(\widetilde{c}q;\widetilde{a}, \widetilde{b}, \widetilde{c}; q))^2 \{1\!+\! B\K_{n-1}(\widetilde{a}q,\widetilde{a}q)\}  +
B(P_n(\widetilde{a}q;\widetilde{a}, \widetilde{b}, \widetilde{c}; q))^2 \{ 1\!+\! A\K_{n-1}(\widetilde{c}q,\widetilde{c}q) \} }{\kappa_{n-1}(\widetilde{c}q, \widetilde{a}q)} \\
& -\frac{2 AB P_n(\widetilde{c}q;\widetilde{a}, \widetilde{b}, \widetilde{c}; q) P_n(\widetilde{a}q;\widetilde{a}, \widetilde{b}, \widetilde{c}; q)\K_{n-1}(\widetilde{c}q,\widetilde{a}q)}
{\kappa_{n-1}(\widetilde{c}q, \widetilde{a}q)}
\end{split}
\end{equation*}\end{small}
where $\widetilde{d}_n^2$ denotes the squared norm of the $n$-th big $q$-Jacobi polynomials
(see Table 1 in \cite{RC}) and
\begin{equation}\label{Kab)}\begin{split}
\kappa_{m}(s,t)& =1+A\K_{m}(s,s)+ B\K_{m}(t,t)\\
& +AB\left\{\K_{m}(s,s)\K_{m}(t,t)-(\K_{m}(s,t))^2\right\},\end{split}
\end{equation}
being $\K_{m}(s,t)$ the kernels defined by \refe{kerbigqjac}.
They can be written \cite{RR} as 
\begin{equation}
\begin{split}
P_n^{A,B}(z,\widetilde{a},\widetilde{b},\widetilde{c};q)=P_n(z,\widetilde{a},\widetilde{b},\widetilde{c};q)_q&-AP_n^{A,B}(\widetilde{c}q,\widetilde{a},\widetilde{b},\widetilde{c};q)\K_{n-1}(z,\widetilde{c}q)\\
&-BP_n^{A,B}(\widetilde{a}q,\widetilde{a},\widetilde{b},\widetilde{c};q)\K_{n-1}(z,\widetilde{a}q).
\label{repbigqjac}
\end{split}
\end{equation}
The big $q$-Jacobi-Krall-type polynomials satisfy the TTRR \cite{RC}
\begin{equation*}
z P_n^{A,B}(z,\widetilde{a},\widetilde{b},\widetilde{c};q)=\widetilde{\alpha}_n^{A,B}P_{n+1}^{A,B}(z,\widetilde{a},\widetilde{b},\widetilde{c};q)+
\widetilde{\beta}_n^{A,B}P_{n}^{A,B}(z,\widetilde{a},\widetilde{b},\widetilde{c};q)
+\widetilde{\gamma}_n^{A,B}P_{n-1}^{A,B}(z,\widetilde{a},\widetilde{b},\widetilde{c};q)
\end{equation*} 
where 
\begin{small}
\begin{equation}\label{bigqjacTTRRcoeff}
\begin{split}
\widetilde{\alpha}_n^{A,B} &= 1, \\
\widetilde{\beta}_n^{A,B} &=   \widetilde{\beta}_n
-A \left(\frac{P_n^{A,B}(\widetilde{c}q,\widetilde{a},\widetilde{b},\widetilde{c};q) P_{n-1}(\widetilde{c}q,\widetilde{a},\widetilde{b},\widetilde{c};q)}{\widetilde{d}_{n-1}^2}-
\frac{P_{n+1}^{A,B}(\widetilde{c}q,\widetilde{a},\widetilde{b},\widetilde{c};q)P_n(\widetilde{c}\widetilde{c}q,\widetilde{a},\widetilde{b},\widetilde{c};q) }{\widetilde{d}_{n}^2}\right) \\
&-B \left(\frac{P_n^{A,B}(\widetilde{a}q,\widetilde{a},\widetilde{b},\widetilde{c};q) P_{n-1}(\widetilde{a}q,\widetilde{a},\widetilde{b},\widetilde{c};q) }{\widetilde{d}_{n-1}^2}-
\frac{P_{n+1}^{A,B}(\widetilde{a}q,\widetilde{a},\widetilde{b},\widetilde{c};q) P_n(\widetilde{a}q,\widetilde{a},\widetilde{b},\widetilde{c};q) }{\widetilde{d}_{n}^2}\right), \\
\widetilde{\gamma}_n^{A,B} &= \widetilde{\gamma}_n\frac{1\!+\!\widetilde{\Delta}_n^{A,B}}{1\!+\!\widetilde{\Delta}_{n-1}^{A,B}},\,\, \widetilde{\Delta}_n^{A,B}=
\frac{ A P_n^{A,B}(\widetilde{c}q,\widetilde{a},\widetilde{b},\widetilde{c};q) P_n(\widetilde{c}q,\widetilde{a},\widetilde{b},\widetilde{c};q) }{\widetilde{d}_{n}^2}\!+\!
\frac{B P_n^{A,B}(\widetilde{a}q,\widetilde{a},\widetilde{b},\widetilde{c};q) P_n(\widetilde{a}q,\widetilde{a},\widetilde{b},\widetilde{c};q)}{\widetilde{d}_{n}^2}.
\end{split}
\end{equation}
\end{small}

The monic dual $q$-Hahn polynomials are defined by \cite{ks}
\begin{equation}\label{dualqh}
\begin{split}
R_n(s)_q:=R_{n}(x(s),\gamma,\delta,N;q) = (\gamma q,q^{-N};q)_n\,
{}_{3}\varphi_2 \left(\ba{c} q^{-n},q^{-s},
\gamma\delta q^{s+1}  \\ \gamma q,q^{-N}  \ea
\,\bigg|\, q \,,\, q \right).
\end{split}
\end{equation}
They are  orthogonal with respect to the positive weight function
\cite[(14.7.2)]{ks} supported on the points $x(s)=q^{-s}+\gamma\delta q^{s+1},$ $s=0,1,...,N$, for $0<\gamma q<1$, $0<\delta q<1$ or for $\gamma>q^{-N}$, $\delta>q^{-N}$, i.e., 
\begin{equation}\label{dualqhahnorth}
\sum_{s=0}^NR_n(s)_qR_m(s)_q\widetilde{\rho}(s)\Delta x(s-\mbox{$\frac 12$})=\widetilde{d}_n^2,
\quad \Delta x(s-\mbox{$\frac 12$})=(-\kappa_q)q^{-s}(1-\gamma\delta q^{2s+1})
\end{equation}
\begin{equation*}
\begin{split}
\widetilde{\rho}(s)&=\frac{(\gamma q)^N(\delta q;q)_N}{(\gamma\delta q^2;q)_N}
\frac{q^{Ns-(^s_2)}}{(-\kappa_q)(1-\gamma\delta q)(-\gamma)^s}
\frac{(\gamma q,\gamma\delta q,q^{-N};q)_s}{(q,\gamma\delta q^{N+2},\delta q;q)_s},\\
\widetilde{d}_n^2&=(\gamma\delta q)^n(q,q^{-N},\gamma q,\delta^{-1}q^{-N};q)_n.
\end{split}
\end{equation*}

Finally, we introduce the monic $q$-Hahn polynomials \cite{ks}
\begin{equation}\label{qhahn}
\begin{split}
h_n(s)_q:=h_{n}^{\widetilde{\alpha},\widetilde{\beta}}(x(s);N|q)=
\frac{(q^{-N},\widetilde{\alpha}q;q)_n}{(\widetilde{\alpha}\widetilde{\beta} q^{n+1};q)_n}
{}_{3}\varphi_2 \left(\ba{c} q^{-n},\widetilde{\alpha}\widetilde{\beta} q^{n+1}, x(s),
 \\ q^{-N},\widetilde{\alpha}q  \ea
\,\bigg|\, q \,,\, q \right),
\end{split}
\end{equation}
which are orthogonal with respect to a positive weight function
\cite[(14.6.2)]{ks} supported on the points $x(s)=q^{-s},$ $s=0,1,...,N$, 
for $0<\alpha q<1$, $0<\beta q<1$ or for $\alpha>q^{-N}$, $\beta>q^{-N}$, i.e.,
\begin{equation}\label{qhahnorth}
\sum_{s=0}^Nh_n(s)_qh_m(s)_q\widetilde{\rho}(s)\Delta x(s-\mbox{$\frac 12$})=\widetilde{d}_n^2,
\end{equation}
where $\Delta x(s-\mbox{$\frac 12$})=-\kappa_q q^{-s}$,
\begin{equation*}
\begin{split}
\widetilde{\rho}(s)&=\frac{(\alpha\beta)^{-s}}{(-\kappa_q)}
\frac{(\alpha q)^N(\beta q; q)_N}{(\alpha\beta q^2;q)_N}\frac{(\alpha q, q^{-N};q)_s}{(q,\beta^{-1}q^{-N};q)_s},\\
\widetilde{d}_n^2&=(-\alpha q)^nq^{(^n_2)-Nn}\frac{1-\alpha\beta q}{1-\alpha\beta q^{2n+1}}
\frac{(q,\alpha q,\beta q,q^{-N},\alpha\beta q^{N+2};q)_n}{(\alpha\beta q,\alpha\beta q^{n+1},\alpha\beta q^{n+1};q)_n}.
\end{split}
\end{equation*}


\section{Limit relation between non-standard $q$-Racah and big $q$-Jacobi polynomials}
In this section we establish a limit formula from the non-standard $q$-Racah polynomials 
\refe{q-racah} to big $q$-Jacobi polynomials \refe{bigqjacobi} that preserves the orthogonality relation 
as well as the TTRR. 
\begin{theorem}\label{thm1}
Let
\begin{equation}\label{bigqjactrans}
 \widetilde{\mu}(s)=q^{N+1}\widetilde{a}q^{a+1}\Big[\frac{\mu(s+a)-c_3}{c_1}\Big], \,
q^{\alpha}=\widetilde{a},\, q^{\beta}=\widetilde{b},\, q^{a-b}=q^{-N-1}, \,q^{a+b}=\frac{\widetilde{c}}{\widetilde{a}}.
\end{equation}
Then, the following limit formula between the non-standard $q$-Racah
and big $q$-Jacobi polynomials holds
\begin{equation}\label{limitqracah-bigqjacobi}
\lim_{N\rightarrow\infty}C_nu_n^{\alpha,\beta}( \widetilde{\mu}(s),a,b)_q= 
P_n(\widetilde x(s),\widetilde{a},\widetilde{b},\widetilde{c};q),
\end{equation}
where
\begin{equation}\label{normalizationconst}
C_n=(q^{N})^{n/2} (q\widetilde{a})^n\left(\frac{\widetilde{c}}{\widetilde{a}}\right)^{\frac n2}
\kappa_q^{2n} \, \mbox{ and } \,  \{\widetilde x(s)\}=\{q^{s+1}\widetilde{c}\}_{s=0}^{\infty}\bigcup
\{q^{s+1}\widetilde{a}\}_{s=0}^{\infty}.
\end{equation}
Moreover, the orthogonality relation of the non-standard $q$-Racah polynomials 
\refe{ort-umod} becomes into the one of big $q$-Jacobi polynomials \refe{bigqjacobiorth}.
\end{theorem}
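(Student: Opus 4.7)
The plan is to prove \refe{limitqracah-bigqjacobi} as a polynomial identity first, by direct manipulation of the basic hypergeometric representation \refe{q-racah}, and then to deduce the transfer of the orthogonality from the termwise estimates already obtained in that first stage. After the shift $s\mapsto s+a$ inside the $_4\varphi_3$ of \refe{q-racah} and substitution of \refe{bigqjactrans}, the numerator parameters become $q^{-n},\widetilde{a}\widetilde{b}q^{n+1},q^{-s},q^{s-N}\widetilde{c}/\widetilde{a}$ and the denominators $q^{-N},\widetilde{b}q,\widetilde{c}q$; all the $N$-dependence of the summand now lives in the ratio
\[
\frac{(q^{s-N}\widetilde{c}/\widetilde{a};q)_k}{(q^{-N};q)_k}=\prod_{j=0}^{k-1}\frac{1-q^{s-N+j}\widetilde{c}/\widetilde{a}}{1-q^{-N+j}}\;\xrightarrow[N\to\infty]{}\;\left(\frac{q^s\widetilde{c}}{\widetilde{a}}\right)^{k},
\]
obtained by factoring out the dominant $-q^{s-N+j}\widetilde{c}/\widetilde{a}$ and $-q^{-N+j}$ and letting the surviving factors tend to one. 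Multiplying by the original $q^k$ from the $_4\varphi_3$, the $k$-th summand becomes $(q^{s+1}\widetilde{c}/\widetilde{a})^k$, exactly the argument of the $_3\varphi_2$ defining $P_n(q^{s+1}\widetilde{c},\widetilde{a},\widetilde{b},\widetilde{c};q)$ via \refe{bigqjacobi}; relabelling $s\mapsto b-1-s$ produces the symmetric limit near the upper endpoint and yields the second branch $P_n(q^{k+1}\widetilde{a},\ldots)$. The prefactor of \refe{q-racah} multiplied by $C_n$ has to match the prefactor of \refe{bigqjacobi}; the asymptotics $(q^{-N};q)_n\sim(-1)^n q^{-Nn+n(n-1)/2}$ combined with $q^{-n(2a+1)/2}=q^{nN/2-n/2}(\widetilde{a}/\widetilde{c})^{n/2}$ (from $q^{2a}=q^{-N-1}\widetilde{c}/\widetilde{a}$) and with \refe{normalizationconst} cancel all $N$- and $\kappa_q$-dependence and reproduce $(-\widetilde{a})^n q^{n(n+1)/2}(q\widetilde{b},q\widetilde{c};q)_n/(\widetilde{a}\widetilde{b}q^{n+1};q)_n$ exactly. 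Since both sides of \refe{limitqracah-bigqjacobi} are polynomials of degree $n$ in $\widetilde{\mu}$, pointwise convergence at more than $n$ lattice points promotes the identity to every $\widetilde{\mu}$.

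For the orthogonality I would split the $N$-term sum in \refe{ort-umod} into two halves, parametrized respectively by $s=a+k$ (whose $\widetilde{\mu}$-values accumulate at $q^{k+1}\widetilde{c}$) and by the mirror image $s=b-1-k$ (whose $\widetilde{\mu}$-values accumulate on the branch $\widetilde{a}q^{j+1}$). Using $\tilG(x)=q^{-(x-1)(x-2)/4}(1-q)^{1-x}(q;q)_\infty/(q^x;q)_\infty$ I would rewrite \refe{qracahrho} entirely in terms of $q$-Pochhammer symbols and track the $b\to\infty$ asymptotics of each factor; the $b$-dependent pieces should either cancel against a slice of $C_n^{-2}$ or, after insertion of $\widetilde{a}=q^{\alpha}$, $\widetilde{b}=q^{\beta}$, $\widetilde{c}/\widetilde{a}=q^{a+b}$, reassemble into $\widetilde{\rho}(\widetilde{c}q^{k+1})$ or $\widetilde{\rho}(\widetilde{a}q^{k+1})$ from \refe{rhodn}. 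The factor $\Delta\mu(s-\tfrac12)=[2s+1]_q$ contributes the extra geometric weight $q^{k+1}$ and the surviving constants collect into $(1-q)(-\widetilde{c})$ and $(1-q)\widetilde{a}$ respectively, reproducing \refe{bigqjacobiorth} term by term; the squared-norm identity $d_n^2/C_n^2\to\widetilde{d}_n^2$ then follows from the same prefactor bookkeeping already performed for the leading coefficient of the polynomials.

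The most delicate step is the cancellation of divergent $\tilG$-factors on the right-hand branch $s=b-1-k$, where every factor of \refe{qracahrho} involves $b$ and several diverge as $b\to\infty$; matching them against $C_n^2$ is precisely what dictates the form of $C_n$ in \refe{normalizationconst} and the asymmetric constants $-\widetilde{c}$ and $\widetilde{a}$ that appear in \refe{bigqjacobiorth}. A secondary concern is justifying passage to the limit inside what becomes, as $N\to\infty$, an infinite sum; since the limiting polynomials grow at most polynomially in $k$ while $\widetilde{\rho}$ decays geometrically, a standard dominated-convergence argument on each branch controls the tails uniformly in $N$ and permits the termwise limit to be taken, completing the proof.
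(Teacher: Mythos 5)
Your proposal is correct, and for the main limit \refe{limitqracah-bigqjacobi} it takes a genuinely different route from the paper. The paper first converts the pair $(q^{a-s};q)_k(q^{a+s+1};q)_k$ into a product over the lattice variable via the identity $(q^{s_1-s};q)_k(q^{s_1+s+\xi};q)_k=(-1)^kq^{k(s_1+\xi+\frac{k-1}{2})}\prod_{i}[\cdots]$, so that $u_n^{\alpha,\beta}$ appears explicitly as a polynomial in $\widetilde{\mu}(s)$ with the factor $\prod_{i=0}^{k-1}[\widetilde{\mu}(s)-\widetilde{c}q^{i+1}-\widetilde{a}q^{N+1-i}]$; the limit $N\to\infty$ is then taken coefficientwise and yields $(q\widetilde{c}/\widetilde{x}(s);q)_k(\widetilde{x}(s)/\widetilde{a})^k$ uniformly on both branches of the support at once. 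You instead keep the basic series in Pochhammer form, isolate all $N$-dependence in the ratio $(q^{s-N}\widetilde{c}/\widetilde{a};q)_k/(q^{-N};q)_k\to(q^{s}\widetilde{c}/\widetilde{a})^k$ at fixed $s$ on the lower branch, and then promote pointwise convergence at $n+1$ accumulating nodes to coefficient convergence by a degree/interpolation argument. Both are valid; the paper's rewriting buys a manifestly polynomial statement in $\widetilde{x}(s)$ without any interpolation step, while your version is closer to the classical termwise limit and makes transparent exactly which factors carry the divergence. For the orthogonality transfer the two arguments coincide in structure (split the $N+1$ nodes into the two branches, renormalize $\rho$ by $c_1(-\kappa_q)/(q^{N+1}\widetilde{a}q^{a+1})$, and pass to the limit); the paper simply asserts the key limits \refe{Nrelation} and the convergence of the truncated sums, whereas you spell out the $\tilG$-asymptotics and supply the dominated-convergence justification for interchanging the limit with the now-infinite sums — a point the paper glosses over. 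One small slip: from $q^{2a}=q^{-N-1}\widetilde{c}/\widetilde{a}$ one gets $q^{-\frac{n}{2}(2a+1)}=q^{Nn/2}(\widetilde{a}/\widetilde{c})^{n/2}$, without your extra factor $q^{-n/2}$; with this correction the prefactor bookkeeping closes exactly as you state, reproducing $(-\widetilde{a})^nq^{n(n+1)/2}(q\widetilde{b},q\widetilde{c};q)_n/(\widetilde{a}\widetilde{b}q^{n+1};q)_n$.
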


\begin{proof}
From \refe{qracahlattice} it follows that for $s=0,1,...,N$,
\begin{equation*}
\begin{split}
q^{N+1}\widetilde{a}q^{a+1}\Big[\frac{\mu(s+a)-c_3}{c_1}\Big] & =q^{N+1}\widetilde{a}\frac{q^{a+1}}{c_1}
\Big[c_1(q^{s+a}+q^{-s-a-1})+c_3-c_3\Big]\\
&=q^{N+1}\widetilde{a}\Big[q^{-s}+q^{2a}q^{s+1}\Big]=q^{N+1}\widetilde{a}
q^{-s}+\widetilde{c}q^{s+1}=\widetilde{\mu}(s).
\end{split}
\end{equation*}

Following \cite{TK} 
we remark that for certain $M$ depending on $N$ such that $M<N$, the set of points  $\{\widetilde{\mu}(s)\}_{s=0}^N$
can be written as the union of the increasing sequence of non positive points 
$$
\Big\{q\widetilde{c}+q^{N+1}\widetilde{a}, q^2\widetilde{c}+q^{N}\widetilde{a},...,
q^M\widetilde{c}+q^{N-M+2}\widetilde{a}\Big\}
$$
and the decreasing sequence of non negative points
$$
\Big\{q\widetilde{a}+q^{N+1}\widetilde{c}, q^2\widetilde{a}+q^{N}\widetilde{c}, ...,
q^{M+1}\widetilde{a}+q^{N-M+1}\widetilde{c}\Big\},
$$
which tend to the union of the sequence of negative points $\{q^{s+1}\widetilde{c}\}_{s=0}^{\infty}$
and the sequence of positive points $ \{q^{s+1}\widetilde{a}\}_{s=0}^{\infty}$ as $N\rightarrow\infty$,
i.e., to the set  $\{\widetilde x(s)\}$. Notice that it is precisely the support of the orthogonality measure
of the big $q$-Jacobi polynomials (see \refe{bigqjacobiorth}).

Next, we rewrite \refe{q-racah} by using the identity (see e.g. \cite{RR})
\begin{equation*}
(q^{s_1-s};q)_k(q^{s_1+s+\xi};q)_k=(-1)^kq^{k(s_1+\xi+\frac{k-1}{2})}\prod_{i=0}^{k-1}
\Big[\frac{\mu(s)-c_3}{c_1}-q^{-\frac{\xi}{2}}\Big(q^{s_1+i+\frac{\xi}{2}}+
q^{-s_1-i-\frac{\xi}{2}}\Big)\Big].
\end{equation*} 
In fact, setting $s_1=a, \xi=1$ and making the transformation \refe{bigqjactrans}, 
it becomes 
\begin{equation*}
\begin{split}
(q^{-s};q)_k(q^{s+2a+1};q)_k&=(-1)^k\frac{q^{k(a+1+\frac{k-1}{2})}}{q^{(N+1)k}
\widetilde{a}^kq^{k(a+1)}}\\
&\times\prod_{i=0}^{k-1}\Big[q^{N+1}\widetilde{a}q^{a+1}\frac{\mu(s+a)-c_3}{c_1}-\widetilde{c}q^{i+1}-\widetilde{a}q^{N+1-i}\Big].
\end{split}
\end{equation*}
Then, 
\begin{equation*}
\begin{split}
u_n^{\alpha,\beta}(s+a,a,b)_q  &= \Big(q^{-N}\frac{\widetilde{c}}{\widetilde{a}}\Big)^{-n/2}\frac{(q^{-N},\widetilde{b}q,\widetilde{c}q;q)_n}
{\kappa_q^{2n}(\widetilde{a}\widetilde{b}q^{n+1};q)_n}\sum_{k=0}^{n}\frac{(q^{-n}, \widetilde{a}\widetilde{b}q^{n+1};q)_k}
{(\widetilde{b}q,\widetilde{c}q,q;q)_k}q^k\\
& \times\Bigg\{\frac{1}{(q^{-N};q)_k}\frac{(-1)^kq^{k\frac{(k-1)}{2}}}{q^{(N+1)k}\widetilde{a}^k}
\prod_{i=0}^{k-1}\Big[\widetilde{\mu}(s)
-\widetilde{c}q^{i+1}-
\widetilde{a}q^{N+1-i}\Big]\Bigg\}.
\end{split}
\end{equation*}
Next we need to take the limit $N\rightarrow\infty$. Notice that 
the set $\{\widetilde{\mu}(s)\}_{s=0}^N$ becomes into the set  
$ \{\widetilde x(s)\}:=\{q^{s+1}\widetilde{c}\}_{s=0}^{\infty}\bigcup \{q^{s+1}\widetilde{a}\}_{s=0}^{\infty}$.
Then, 
\begin{equation*}
\begin{split}
\lim_{N\rightarrow\infty}C_nu_n^{\alpha,\beta}(\widetilde{\mu}(s),a,b)_q &= (-\widetilde{a})^nq^{\frac{n(n+1)}{2}}
\frac{(\widetilde{b}q,\widetilde{c}q;q)_n}
{(\widetilde{a}\widetilde{b}q^{n+1};q)_n}\\
&\times\sum_{k=0}^{\infty}\frac{(q^{-n}, \widetilde{a}\widetilde{b}q^{n+1};q)_k}
{(\widetilde{b}q,\widetilde{c}q,q;q)_k}(\widetilde{a}^{-1} \widetilde{x}(s))^{k}\prod_{i=0}^{k-1}
\left[1-\frac{\widetilde{c}q^{i+1}}{\widetilde{x}(s)}\right]
\end{split}
\end{equation*}
where $\prod_{i=0}^{k-1}\Big[1-\widetilde{c}q^{i+1}/\widetilde{x}(s)\Big]=(q\widetilde{c}/\widetilde{x}(s);q)_k,$
from where \refe{limitqracah-bigqjacobi} follows.  A similar analysis was done 
in \cite{TK} but starting from the standard $q$-Racah polynomials. 

To show that the orthogonality relation of the 
non-standard $q$-Racah polynomials becomes into the one of 
the big $q$-Jacobi polynomials we rewrite \refe{ort-umod} 
 using the transformation \refe{bigqjactrans}. This yields 
\bq\label{qracahorth}
\begin{split}
\sum_{s=0}^Nf(q^s)
\Delta  \widetilde{\mu}(s-\mbox{$\frac 12$})&
=\sum_{s=0}^{M-1}f(q^s)\Delta[q^{N+1}\widetilde{a}q^{s+\frac 12}+\widetilde{c}q^{s+\frac 12}]\\
&-\sum_{s=0}^{N-M}f(q^{N-s})\Delta[\widetilde{a}q^{s+\frac 12}+q^{N+1}\widetilde{c}q^{-s+\frac 12}]=d_n^2\delta_{mn},
\end{split}
\eq
where $M$, as before, depends on $N$, $M<N$ and
$$
f(q^s)=\frac{1}{C_nC_m}[C_n{u}_n^{\alpha, \beta}(s+a)_q] [C_m{u}_m^{\alpha, \beta}(s+a)_q] 
\frac{c_1(-\kappa_q)}{q^{N+1}\widetilde{a}q^{a+1}}\rho(s+a).
$$ 
If we take the limit $N\rightarrow\infty$ and use that 
\begin{equation}\label{Nrelation}
\begin{split}
& \lim_{N\to\infty}C_n^2d_n^2=\widetilde{d}_n^2,\quad \lim_{N\rightarrow\infty}\frac{c_1(-\kappa_q)}{q^{N+1}\widetilde{a}q^{a+1}}\rho(s+a)=(1-q)\widetilde{\rho}(\widetilde{c}q^{s+1}),\\&
\lim_{N\rightarrow\infty}\frac{c_1(-\kappa_q)}{q^{N+1}\widetilde{a}q^{a+1}}\rho(N-s+a)=(1-q)\widetilde{\rho}(\widetilde{a}q^{s+1}),
\end{split}
\end{equation}
where $\widetilde{\rho}$ and $\widetilde{d}_n$ are the weight function and 
the norm of the big $q$-Jacobi polynomials, respectively,  
then \refe{qracahorth} becomes into
\begin{equation*}
\begin{split}
&(1-q)(-\widetilde{c})\sum_{s=0}^{\infty}P_n(\widetilde{c}q^{s+1})P_m(\widetilde{c}q^{s+1})\widetilde{\rho}
(\widetilde{c}q^{s+1})q^{s+1}\\
&+(1-q)\widetilde{a}\sum_{s=0}^{\infty}P_n(\widetilde{a}q^{s+1})P_m(\widetilde{a}q^{s+1})\widetilde{\rho}(\widetilde{a}q^{s+1})q^{s+1}=\widetilde{d}_n^2\delta_{mn},
\end{split}
\end{equation*}
which is the orthogonality relation of the big $q$-Jacobi polynomials \refe{bigqjacobiorth}.
\end{proof}

To conclude this part let us show that the limit procedure stated in Theorem \ref{thm1} transforms also 
the TTRR of the non-standard $q$-Racah polynomials \refe{q-racahTTRR} into the
TTRR of the monic big $q$-Jacobi polynomials. 
Using the transformation \refe{bigqjactrans} in the TTRR \refe{q-racahTTRR}, we get
\begin{equation*}
\begin{split}
 \widetilde{\mu}(s) u_n^{\alpha,\beta}(s+a)_q=& q^{N+1}\widetilde{a}\frac{q^{a+1}}{c_1}u_{n+1}^{\alpha,\beta}(s+a)_q+q^{N+1}\widetilde{a}\frac{q^{a+1}}{c_1}[\beta_n-c_3] u_{n}^{\alpha,\beta}(s+a)_q
\\
& + q^{N+1}\widetilde{a}\frac{q^{a+1}}{c_1}\gamma_nu_{n-1}^{\alpha,\beta}(s+a)_q.
\end{split}
\end{equation*} 
Multiplying the above equality by the normalization constant $C_n$
\refe{normalizationconst}, taking the limit $N\rightarrow\infty$  
and using the relation \refe{limitqracah-bigqjacobi}, we get
\begin{equation*}
\begin{split}
&\widetilde x(s)P_n(\widetilde x(s),\widetilde{a},\widetilde{b},\widetilde{c};q)=
P_{n+1}(\widetilde x(s),\widetilde{a},\widetilde{b},\widetilde{c};q)
+\widetilde{\beta}_nP_{n}(\widetilde x(s),\widetilde{a},\widetilde{b},\widetilde{c};q)  
+\widetilde{\gamma}_nP_{n-1}(\widetilde x(s),\widetilde{a},\widetilde{b},\widetilde{c};q),
\end{split}
\end{equation*} 
where
\begin{equation}\label{betagammatilde}
\begin{split}
\widetilde{\beta}_n=\lim_{N\rightarrow\infty}q^{N+1}\widetilde{a}\frac{q^{a+1}}{c_1}[\beta_n-c_3],\quad 
\widetilde{\gamma}_n=\lim_{N\rightarrow\infty}q^{N+1}\widetilde{a}\frac{q^{a+1}}{c_1}\frac{C_n}{C_{n-1}}\gamma_n.
\end{split}
\end{equation}
Notice that
\begin{equation*}
\begin{split}
&q^{N+1}\widetilde{a}\frac{q^{a+1}}{c_1}[\beta_n-c_3]=q^{N+1}\widetilde{a}\frac{q^{a+1}}{c_1}\Bigg[
c_1(q^a+q^{-a-1})-q^{-1/2}(1+q)\kappa_q^{-2}\\
&-\frac{q^{-a-\frac 12}
(1-q^{\alpha+\beta+n+1})(1-q^{a-b+n+1})(1-q^{\beta+n+1})(1-q^{a+b+\alpha+n+1})}
{\kappa_q^2(1-q^{\alpha+\beta+2n+1})(1-q^{\alpha+\beta+2n+2})}\\
&-\frac{q^{a+\frac 12}
(1-q^{\alpha+n})(1-q^{b-a+\alpha+\beta+n})(1-q^{-a-b+\beta+n})(1-q^{n})}
{q^{-\frac 12[\alpha+\beta+2n+\alpha+\beta+2n+1]}\kappa_q^2(1-q^{\alpha+\beta+2n})(1-q^{\alpha+\beta+2n+1})}
+q^{-1/2}(1+q)\kappa_q^{-2}\Bigg],
\end{split}
\end{equation*} 
which becomes, by using the transformation \refe{bigqjactrans}, into
\begin{equation*}
\begin{split}
q^{N+1}\widetilde{a}\frac{q^{a+1}}{c_1}[\beta_n-c_3]&=q\widetilde{c}+q^{N+1}\widetilde{a}
-\frac{(1-\widetilde{a}\widetilde{b}q^{n+1})(1-q^{n-N})(1-\widetilde{b}q^{n+1})(1-\widetilde{c}q^{n+1})}{q^{-N-1}\widetilde{a}^{-1}(1-\widetilde{a}\widetilde{b}q^{2n+1})(1-\widetilde{a}\widetilde{b}q^{2n+2})}\\
&-\frac{q\widetilde{c}(1-\widetilde{a}q^{n})(1-\widetilde{a}\widetilde{b}q^{n+N+1})(1-\widetilde{a}\widetilde{b}\widetilde{c}^{-1}q^{n})(1-q^n)}{(1-\widetilde{a}\widetilde{b}q^{2n})(1-\widetilde{a}\widetilde{b}q^{2n+1})}.
\end{split}
\end{equation*} 
Finally, by taking the limit as $N\rightarrow\infty$, we obtain
\begin{equation*}
\begin{split}
\widetilde{\beta}_n=\lim_{N\rightarrow\infty}q^{N+1}\widetilde{a}\frac{q^{a+1}}{c_1}[\beta_n-c_3]&=q\widetilde{c}+\widetilde{a}q^{n+1}
\frac{(1-\widetilde{a}\widetilde{b}q^{n+1})(1-\widetilde{b}q^{n+1})(1-\widetilde{c}q^{n+1})}{(1-\widetilde{a}\widetilde{b}q^{2n+1})(1-\widetilde{a}\widetilde{b}q^{2n+2})}\\
&-q\widetilde{c}\frac{(1-\widetilde{a}q^{n})(1-\widetilde{a}\widetilde{b}\widetilde{c}^{-1}q^{n})(1-q^n)}{(1-\widetilde{a}\widetilde{b}q^{2n})(1-\widetilde{a}\widetilde{b}q^{2n+1})},
\end{split}
\end{equation*} 
which is equivalent to $\widetilde{\beta}_n$ in \refe{tildebetan}.
In a complete analogous way, one can obtain 
\begin{equation*}
\begin{split}
\widetilde{\gamma}_n= \lim_{N\rightarrow\infty}  q^{N+1}\widetilde{a}\frac{q^{a+1}}{c_1}\frac{C_n}{C_{n-1}}\gamma_n&=-\widetilde{a}\widetilde{c}q^{n+1}(1\!-\!\widetilde{a}q^{n})(1\!-\!\widetilde{b}q^{n})(1\!-\!\widetilde{c}q^{n})\\
&\times\frac{(1-q^n)(1-\widetilde{a}\widetilde{b}q^{n})(1-\widetilde{a}\widetilde{b}\widetilde{c}^{-1}q^{n})}{(1-\widetilde{a}\widetilde{b}q^{2n-1})(1-\widetilde{a}\widetilde{b}q^{2n})^2(1-\widetilde{a}\widetilde{b}q^{2n+1})},
\end{split}
\end{equation*} 
which is the coefficient $\widetilde{\gamma}_n$  of the TTRR of 
the big $q$-Jacobi polynomials \refe{tildebetan}.

A similar analysis can be done but starting with the standard $q$-Racah polynomials \cite[page 422]{ks}.


\section{Limit relation between non-standard $q$-Racah and $q$-Hahn polynomials}

In this section we consider two more limit cases: namely, the limits to 
$q$-Hahn \refe{dualqh} and dual $q$-Hahn \refe{qhahn} polynomials. 
In these two cases the situation is more simple
since these two families are finite families (the orthogonality measure is supported 
on a finite set) as the non-standard $q$-Racah polynomials. 

The limit formula between non-standard $q$-Racah and dual $q$-Hahn polynomials
is stated in the following theorem.
\begin{theorem}\label{thm4}
Let 
\bq\label{dualqhahntrans}
\widetilde\mu(s)=q^{a+1}\frac{\mu(s+a)-c_3}{c_1},\quad q^{a-b}=q^{-N-1},\quad
q^{\beta}=\gamma, \quad q^{2a}=\gamma\delta. 
\eq
Then
\[
\lim_{q^{\alpha}\rightarrow 0}
C_n u_n^{\alpha,\beta}(\widetilde\mu(s) ,a,b)= R_{n}(x(s),\gamma,\delta,N;q),\quad C_n=(\gamma\delta q)^{n/2}\kappa_q^{2n},
\quad x(s)=q^{-s}+\gamma\delta q^{s+1}.
\]
Moreover, the orthogonality relation of the non-standard $q$-Racah polynomials 
\refe{ort-umod}
becomes into the one of dual $q$-Hahn polynomials \refe{dualqhahnorth}.
\end{theorem}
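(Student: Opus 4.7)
The plan is to mirror the proof of Theorem \ref{thm1}, with the limit $N\to\infty$ replaced by $q^{\alpha}\to 0$. Since both families are supported on $N+1$ points (with $b-a=N+1$ implicit in $q^{a-b}=q^{-N-1}$), no support-approximation argument is needed: a direct computation using \refe{qracahlattice} and $q^{2a}=\gamma\delta$ gives
\begin{equation*}
\widetilde\mu(s)=q^{a+1}\frac{\mu(s+a)-c_3}{c_1}=q^{s+2a+1}+q^{-s}=q^{-s}+\gamma\delta q^{s+1}=x(s),
\end{equation*}
so the grid transforms exactly onto the dual $q$-Hahn lattice for $s=0,1,\ldots,N$.

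Next, I would rewrite \refe{q-racah} evaluated at $s+a$: the ${}_4\varphi_3$ parameters $q^{a-(s+a)}=q^{-s}$ and $q^{a+(s+a)+1}=\gamma\delta q^{s+1}$ appear automatically, and \refe{dualqhahntrans} together with $q^{a+b}=q^{2a}q^{b-a}=\gamma\delta q^{N+1}$ transforms the remaining parameters, producing
\begin{equation*}
u_n^{\alpha,\beta}(\widetilde\mu(s),a,b)_q=A_n(\alpha)\,{}_4\varphi_3\!\left(\begin{array}{c} q^{-n},\,q^{\alpha}\gamma q^{n+1},\,q^{-s},\,\gamma\delta q^{s+1} \\ q^{-N},\,\gamma q,\,q^{\alpha}\gamma\delta q^{N+2}\end{array}\bigg|\,q,q\right),
\end{equation*}
with $A_n(\alpha)$ the explicit prefactor coming from \refe{q-racah}. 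As $q^{\alpha}\to 0$, the factors $(q^{\alpha}\gamma q^{n+1};q)_k$ and $(q^{\alpha}\gamma\delta q^{N+2};q)_k$ in each term of the series both tend to $(0;q)_k=1$, so the ${}_4\varphi_3$ collapses termwise to the ${}_3\varphi_2$ of \refe{dualqh}. A short bookkeeping using $q^{2a}=\gamma\delta$ gives $\lim_{q^{\alpha}\to 0}C_nA_n(\alpha)=(\gamma q,q^{-N};q)_n$, the leading coefficient in \refe{dualqh}: the factor $C_n=(\gamma\delta q)^{n/2}\kappa_q^{2n}$ cancels against $q^{-n(2a+1)/2}$ and $\kappa_q^{-2n}$ in $A_n(\alpha)$, while $(q^{a+b+\alpha+1};q)_n$ and $(q^{\alpha+\beta+n+1};q)_n$ both tend to $1$.

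For the orthogonality, I would re-index \refe{ort-umod} by $s\mapsto s+a$, multiply by $C_nC_m$, and use the elementary identity
\begin{equation*}
[2(s+a)+1]_q=\frac{q^{-a-1/2}}{\kappa_q^{2}}\,\Delta x(s-\tfrac12),
\end{equation*}
obtained by writing both sides out and invoking $q^{2a}=\gamma\delta$, to convert the $q$-Racah measure into the dual $q$-Hahn one. The norm calculation is routine: in $d_n^2$, the factors $(q^{\alpha+1};q)_n$, $(q^{a+b+\alpha+1};q)_n$, $(q^{b-a+\alpha+\beta+1};q)_n$, $(q^{\alpha+\beta+2};q)_{2n}$ and $(q^{\alpha+\beta+n+1};q)_n$ all tend to $1$, while $(q^{\beta-a-b+1};q)_n$ becomes $(\delta^{-1}q^{-N};q)_n$ via $q^{-a-b}=(\gamma\delta)^{-1}q^{-N-1}$, and combining with $C_n^2=(\gamma\delta q)^n\kappa_q^{4n}$ produces $(\gamma\delta q)^n(q,\gamma q,q^{-N},\delta^{-1}q^{-N};q)_n=\widetilde d_n^2$.

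The main obstacle is the weight-function limit $\lim_{q^{\alpha}\to 0}\rho(s+a)=\kappa_q^{2}q^{a+1/2}\widetilde\rho(s)$. In \refe{qracahrho}, the $\tilG$-factors involving $s+\alpha+b+1$, $b+\alpha-s$, $\alpha+\beta+2$, $\alpha+1$ and $b-a+\alpha+\beta+1$ depend on $\alpha$ both through the infinite $q$-Pochhammers $(q^{\alpha+c};q)_\infty$ appearing in the definition of $\tilG$ and through the factor $q^{-(s-1)(s-2)/4}$. One regroups these products, applies $(0;q)_\infty=1$ together with the substitutions $q^{2a}=\gamma\delta$ and $q^{b-a}=q^{N+1}$, and tracks the accumulated powers of $q$; the result is the dual $q$-Hahn weight quoted after \refe{dualqhahnorth}. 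This calculation is entirely analogous to \refe{Nrelation} used in the proof of Theorem \ref{thm1}, with $q^{\alpha}$ playing the role of $q^{N}$ as the small parameter driving the limit.
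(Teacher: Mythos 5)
Your proposal is correct and follows the same route as the paper, which for this theorem gives only a sketch: verify that the transformation \refe{dualqhahntrans} maps the lattice onto $x(s)=q^{-s}+\gamma\delta q^{s+1}$, and then observe that, both families being supported on $N+1$ points, the hypergeometric series, the measure, and the norm pass to the limit $q^{\alpha}\to 0$ termwise. You supply the details the paper omits (the collapse of the ${}_4\varphi_3$ to the ${}_3\varphi_2$, the identity relating $[2(s+a)+1]_q$ to $\Delta x(s-\frac12)$, and the limits of $C_n^2d_n^2$ and of the weight), and these all check out.
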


\begin{proof} We present here only the sketch of the proof. 
First of all, notice that by using the transformation \refe{dualqhahntrans} 
it follows from \refe{qracahlattice} that
\[
\begin{split}
\widetilde\mu(s)=
q^{a+1}\frac{\mu(s+a)-c_3}{c_1}&= q^{-s}+\gamma\delta q^{s+1}=x(s).
\end{split}
\] 
Taking into account that the weight function of the dual 
$q$-Hahn polynomials is also supported on a finite set of points, 
then it is straightforward to see that the orthogonality relation 
\refe{dualqhahnorth} can be derived from 
the one of the non-standard $q$-Racah polynomials \refe{ort-umod}
by taking the limit procedure defined in Theorem \ref{thm4}.
\end{proof}

Moreover, applying the same transformation to the TTRR for non-standard $q$-Racah polynomials 
\refe{q-racahTTRR} and taking the limit $q^ \alpha\to0$, we get   
$$
x(s)R_{n}(s)_q=R_{n+1}(s)_q+\widetilde{\beta}_nR_{n}(s)_q
+\widetilde{\gamma}_nR_{n-1}(s)_q,
$$
where
\[
\begin{split}
\widetilde{\beta}_n&=\lim_{q^{\alpha}\rightarrow0}\frac{q^{a+1}}{c_1}[\beta_n-c_3]= 1+\gamma\delta q-(1-\gamma q^{n+1})(1-q^{n-N})
-\gamma q(\delta-q^{n-N-1})(1-q^{n}),\\
\widetilde{\gamma}_n&=\lim_{q^{\alpha}\rightarrow0}\frac{q^{a+1}}{c_1}\frac{C_n}{C_{n-1}}\gamma_n=
\gamma q(1-q^{n})(\delta-q^{n-N-1})(1-q^{n-N-1})(1-\gamma q^{n}),
\end{split}
\]
which is the TTRR for the dual $q$-Hahn polynomials.

 
\begin{theorem}\label{thmqhahn}
Let
\bq\label{qhahntrans}
\widetilde\mu(s)= q^{a+1}\frac{\mu(s+a)}{c_1}, \, q^{\alpha}=\widetilde{\beta},\,
 q^{\beta}=\widetilde{\alpha},\, q^{-b}=q^{-N-1-a}. 
\eq
Then,
\[
\lim_{q^a\rightarrow 0}C_nu_n^{\alpha,\beta}(\widetilde\mu(s),a,b)
=h_{n}^{\widetilde{\alpha},\widetilde{\beta}}(x(s);N|q),\quad 
C_n=q^{\frac{n}{2}(2a+1)}\kappa_q^{2n}, \quad x(s)=q^{-s}.
\]
\end{theorem}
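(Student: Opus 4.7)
My plan mirrors the strategy of the proofs of Theorems \ref{thm1} and \ref{thm4}: substitute \refe{qhahntrans} into the hypergeometric representation \refe{q-racah}, premultiply by $C_n=q^{n(2a+1)/2}\kappa_q^{2n}$ in order to cancel the $q^{-n(2a+1)/2}\kappa_q^{-2n}$ prefactor of the $q$-Racah polynomial, and then take the limit $q^a\to 0$ termwise in the resulting finite $k$-sum. The structural observation that makes the limit work is that, after the transformation, the upper parameter $q^{a+s+1}$ of the ${}_4\varphi_3$ becomes $q^{2a+s+1}$ and the lower parameter $q^{a+b+\alpha+1}$ becomes $\widetilde{\beta}q^{2a+N+2}$; both carry a factor $q^{2a}$, so $(q^{2a+s+1};q)_k\to 1$ and $(\widetilde{\beta}q^{2a+N+2};q)_k\to 1$ as $q^a\to 0$, and the ${}_4\varphi_3$ degenerates into the ${}_3\varphi_2$ that defines $h_n^{\widetilde{\alpha},\widetilde{\beta}}$.

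First I would verify directly from \refe{qracahlattice}, using $c_3/c_1=-q^{-1}(1+q)$, that
$$
\widetilde{\mu}(s)=q^{a+1}\frac{\mu(s+a)}{c_1}=q^{2a+s+1}+q^{-s}-q^{a}-q^{a+1},
$$
which converges to $q^{-s}=x(s)$ as $q^a\to 0$. Since the support $\{s=a,\dots,b-1\}$ shifted by $-a$ with $b-a=N+1$ is already the $q$-Hahn support $\{0,\dots,N\}$, no splitting of the orthogonality sum (as in Theorem \ref{thm1}) is necessary. Next, substituting \refe{qhahntrans} in the prefactor of \refe{q-racah} and multiplying by $C_n$ gives
$$
\lim_{q^a\to 0}\frac{(q^{-N},\widetilde{\alpha}q,\widetilde{\beta}q^{2a+N+2};q)_n}{(\widetilde{\alpha}\widetilde{\beta}q^{n+1};q)_n}=\frac{(q^{-N},\widetilde{\alpha}q;q)_n}{(\widetilde{\alpha}\widetilde{\beta}q^{n+1};q)_n},
$$
which matches the prefactor of \refe{qhahn}, while the ${}_4\varphi_3$ sum collapses termwise to the ${}_3\varphi_2$ in \refe{qhahn}, with $q^{-s}$ as the remaining numerator parameter.

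The whole argument is essentially bookkeeping; the main obstacle is simply keeping careful track of the shift $s\mapsto s+a$ and of the $q^{2a}$ factors in the rescaled Pochhammers, and verifying that the limit commutes with the finite $k$-summation (which is automatic since $k\le n$). Once this is done, the analogue of Theorem \ref{thm4} also transforms the orthogonality relation \refe{ort-umod} into \refe{qhahnorth}, using $\lim_{q^a\to 0}C_n^2 d_n^2=\widetilde{d}_n^2$ and the corresponding convergence of the rescaled weight $\tfrac{-\kappa_q q^{a+1}}{c_1}\rho(s+a)$ to the $q$-Hahn weight, and transforms \refe{q-racahTTRR} into the $q$-Hahn TTRR in complete analogy with the calculation carried out after Theorem \ref{thm1}.
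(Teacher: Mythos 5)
Your proposal is correct and follows essentially the same route as the paper, which for Theorem \ref{thmqhahn} only sketches the argument (verifying $\lim_{q^a\to 0}\widetilde{\mu}(s)=q^{-s}$ and asserting the rest); your termwise treatment of the $q^{2a}$-bearing Pochhammer symbols $(q^{2a+s+1};q)_k$ and $(\widetilde{\beta}q^{2a+N+2};q)_k$ is exactly the natural filling-in of that sketch, in line with the detailed computation the paper carries out for Theorem \ref{thm1}. The lattice identity $\widetilde{\mu}(s)=q^{2a+s+1}+q^{-s}-q^{a}-q^{a+1}$ and the cancellation produced by $C_n$ both check out.
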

\begin{proof} We here only sketch the proof. 
Notice that the limit procedure stated in Theorem \ref{thmqhahn} yields
$\lim_{q^a\rightarrow0}\widetilde\mu(s)=q^{-s}=x(s)
$. 
Moreover, the orthogonality relation of the $q$-Hahn
polynomials \refe{qhahnorth} easily follows from the orthogonality relation of
the non-standard $q$-Racah polynomials \refe{ort-umod} by taking the limit defined 
in the Theorem \ref{thmqhahn}.
\end{proof}

Finally, let us mention that by using the transformation stated in Theorem \ref{thmqhahn},
the recurrence relation for the $q$-Racah polynomials 
\refe{q-racahTTRR} transforms into the TTRR for $q$-Hahn polynomials \cite{ks}:
\[
x(s)h_{n}(s)_q=h_{n+1}(s)_q+\widetilde{\beta}_nh_{n}(s)_q
+\widetilde{\gamma}_nh_{n-1}(s)_q,
\]
\[
\begin{split}
\widetilde{\beta}_n =\lim_{q^a\rightarrow0} \frac{q^{a+1}}{c_1}\beta_n&=  1-\frac{(1-\widetilde{\alpha}\widetilde{\beta}q^{n+1})
(1-\widetilde{\alpha}q^{n+1})(1-q^{n-N})}{(1-\widetilde{\alpha}\widetilde{\beta}q^{2n+1})(1-\widetilde{\alpha}\widetilde{\beta}q^{2n+2})}\\
&+\frac{\widetilde{\alpha}q^{n-N}(1-q^n)(1-\widetilde{\beta}q^n)(1-\widetilde{\alpha}\widetilde{\beta}q^{n+N+1})}{(1-\widetilde{\alpha}\widetilde{\beta}q^{2n})(1-\widetilde{\alpha}\widetilde{\beta}q^{2n+1})},\\
\widetilde{\gamma}_n=\lim_{q^a\rightarrow0}\frac{q^{a+1}}{c_1}\frac{C_n}{C_{n-1}}\gamma_n&= 
-\widetilde{\alpha}q^{n-N}\frac{(1-\widetilde{\alpha}\widetilde{\beta}q^{n})(1-\widetilde{\alpha}q^{n})(1-\widetilde{\beta}q^{n})(1-\widetilde{\alpha}\widetilde{\beta}q^{n+N+1})}{(1-\widetilde{\alpha}\widetilde{\beta}q^{2n-1})(1-\widetilde{\alpha}\widetilde{\beta}q^{2n})^2
(1-\widetilde{\alpha}\widetilde{\beta}q^{2n+1})}\\ 
&\times (1-q^n)(1-q^{n-N}).
\end{split}
\]

\section{The limit relation for the Krall-type polynomials}

As a consequence of the limit formula between
non-standard $q$-Racah and big $q$-Jacobi polynomials (see Theorem \ref{thm1})
one can obtain the big $q$-Jacobi-Krall-type polynomials from
the $q$-Racah-Krall-type polynomials. 

In order to get this kind of limit formula for the Krall-type polynomials, 
we apply the transformation \refe{bigqjactrans} to the formula \refe{repforqracah}.  So,  
multiplying \refe{repforqracah} by $C_n^2$ in \refe{bigqjactrans} we have the expression 
 
\begin{equation*}
C_nu_n^{\alpha,\beta,A,B}(s)_q=C_nu_n^{\alpha,\beta}(s)_q-AC_nu_n^{\alpha,\beta,A,B}(a)_q\K_{n-1}^{\alpha,\beta}(s,a)-BC_nu_n^{\alpha,\beta,A,B}(b-1)_q\K_{n-1}^{\alpha,\beta}(s,b-1).
\end{equation*}
But $\lim_{N\to\infty}C_nu_n^{\alpha,\beta}(s)_q= P_n(z,\widetilde{a},\widetilde{b},\widetilde{c};q)$
\refe{limitqracah-bigqjacobi} where $C_n$ is defined in \refe{normalizationconst}. The kernel \refe{rez13} becomes
\[
\begin{split}
\K_n^{\alpha,\beta}(s_1, s_2):&=\lim_{N\to\infty}\dst\sum_{k=0}^{n}
{\dst\frac{\Big[C_ku_k^{\alpha,\beta}(s_1)_q\Big]\Big[C_ku_k^{\alpha,\beta}(s_2)_q\Big]}{C_k^2d_k^2}}\\
&=\dst\sum_{k=0}^{n}{\dst\frac{P_n(z_1,\widetilde{a},\widetilde{b},\widetilde{c};q)P_n(z_2,\widetilde{a},\widetilde{b},\widetilde{c};q)}{\widetilde{d}_k^2}}=\K_n(z_1,z_2)
\end{split}
\]
whereas
\[ \lim_{N\to\infty}C_nu_n^{\alpha, \beta}(a)_q= P_n(\widetilde{c}q,\widetilde{a},\widetilde{b},\widetilde{c};q),
\quad \lim_{N\to\infty}C_nu_n^{\alpha, \beta}(b-1)_q= P_n(\widetilde{a}q,\widetilde{a},\widetilde{b},\widetilde{c};q).
\]
Then, straightforward calculations lead to the following expressions for the values of 
the $q$-Racah-Krall-type polynomials at the points $a$ and $b-1$ 
\begin{eqnarray*}
\lim_{N\to\infty}C_nu_n^{\alpha, \beta,A,B}(a)_q= P_n^{A,B}(\widetilde{c}q,\widetilde{a},\widetilde{b},\widetilde{c};q),
\quad \lim_{N\to\infty}C_nu_n^{\alpha, \beta}(b-1)_q= P_n^{A,B}(\widetilde{a}q,\widetilde{a},\widetilde{b},\widetilde{c};q),
\end{eqnarray*}
and the representation formula for the $q$-Racah-Krall-type polynomials \refe{repforqracah}
transforms into the representation formula for the big $q$-Jacobi polynomials
defined in \refe{repbigqjac},
\begin{equation*}
P_n^{A,B}(z,\widetilde{a},\widetilde{b},\widetilde{c};q)\!\!=\!\!P_n(z,\widetilde{a},\widetilde{b},\widetilde{c};q)_q-AP_n^{A,B}(\widetilde{c}q,\widetilde{a},\widetilde{b},\widetilde{c};q)\K_{n-1}(z,\widetilde{c}q)-BP_n^{A,B}(\widetilde{a}q,\widetilde{a},\widetilde{b},\widetilde{c};q)\K_{n-1}(z,\widetilde{a}q).
\end{equation*}

Notice also that using the transformation \refe{bigqjactrans}
into the orthogonality relation \refe{ort-umodi} yields to
\bq\label{qracahorthmodi}
\begin{split}
&\sum_{s = 0 }^{N} \Big[C_n{u}_n^{\alpha, \beta, A,B}(s)_q\Big] \Big[C_m{u}_m^{\alpha, \beta, A,B}(s)_q \Big]
\rho(s)\Delta  \widetilde{\mu}(s-\mbox{$\frac 12$}) + A C_n{u}_n^{\alpha, \beta, A,B}(a)_q
C_m{u}_m^{\alpha, \beta, A,B}(a)_q \\ 
& +BC_n{u}_n^{\alpha, \beta, A,B}(b\!-\!1)_q
C_m{u}_m^{\alpha, \beta, A,B}(b\!-\!1)_q 
=\sum_{s=0}^{M-1}f(q^s)\Delta[q^{N+1}\widetilde{a}q^{s+\frac 12}+\widetilde{c}q^{s+\frac 12}]\\
&-\sum_{s=0}^{N-M}f(q^{N-s})\Delta[\widetilde{a}q^{s+\frac 12}+q^{N+1}\widetilde{c}q^{-s+\frac 12}]+ A C_n{u}_n^{\alpha, \beta, A,B}(a)_q
C_m{u}_m^{\alpha, \beta, A,B}(a)_q \\ & +BC_n{u}_n^{\alpha, \beta, A,B}(b\!-\!1)_q
C_m{u}_m^{\alpha, \beta, A,B}(b\!-\!1)_q = C_nC_m \delta_{n,m} (d_n^{A,B})^2,
\end{split}
\eq
where $M$ depends on $N$ such that $M<N$ and
$$
f(q^s)=[C_n{u}_n^{\alpha, \beta,A,B}(s+a)_q] [C_m{u}_m^{\alpha, \beta,A,B}(s+a)_q] 
\frac{c_1(-\kappa_q)}{q^{N+1}\widetilde{a}q^{a+1}}\rho(s+a).
$$ 
If we now take the limit $N\rightarrow\infty$ and use \refe{normalizationconst} and \refe{Nrelation} we get
\begin{equation*}
\begin{split}
\lim_{N\to\infty}\left(C_nd_n^{A,B}\right)^2 = (\widetilde{d}_n^{A,B})^2.
\end{split}
\end{equation*}
Using now \refe{dn-u2m}, expression \refe{qracahorthmodi} becomes into
\begin{equation*}
\begin{split}
&(1-q)(-\widetilde{c})\sum_{s=0}^{\infty}P_n(\widetilde{c}q^{s+1})P_m(\widetilde{c}q^{s+1})\widetilde{\rho}
(\widetilde{c}q^{s+1})q^{s+1}\\
&+(1-q)\widetilde{a}\sum_{s=0}^{\infty}P_n(\widetilde{a}q^{s+1})P_m(\widetilde{a}q^{s+1})\widetilde{\rho}(\widetilde{a}q^{s+1})q^{s+1}+ A P_n^{A,B}(\widetilde{c}q)_q
P_m^{A,B}(\widetilde{c}q)_q \\ & +BP_n^{A,B}(\widetilde{a}q)_q
P_m^{A,B}(\widetilde{a}q)_q =(\widetilde{d}_n^{A,B})^2\delta_{mn},
\end{split}
\end{equation*}
which is the orthogonality relation of the big $q$-Jacobi-Krall-type polynomials \refe{bigqjacobimodiorth}.

To conclude this section we last deal with the TTRR
of the $q$-Racah-Krall-type polynomials \refe{q-racahmodTTRR}
considering the transformation defined in \refe{bigqjactrans}
which leads to
\begin{equation*}
\begin{split}
 \widetilde{\mu}(s)C_n u_n^{\alpha,\beta,A,B}(s+a)_q&= q^{N+1}\widetilde{a}\frac{q^{a+1}}{c_1}\alpha_n^{A,B}C_nu_{n+1}^{\alpha,\beta,A,B}(s+a)_q\\
 &+q^{N+1}\widetilde{a}\frac{q^{a+1}}{c_1}[\beta_n^{A,B}-c_3] C_nu_{n}^{\alpha,\beta,A,B}(s+a)_q
\\
& + q^{N+1}\widetilde{a}\frac{q^{a+1}}{c_1}\gamma_n^{A,B}C_{n}u_{n-1}^{\alpha,\beta,A,B}(s+a)_q.
\end{split}
\end{equation*} 
taking the limit $N\rightarrow\infty$  
and using the relation \refe{limitqracah-bigqjacobi}, we get
\begin{equation*}
\begin{split}
&\widetilde x(s)P_n^{A,B}(\widetilde x(s),\widetilde{a},\widetilde{b},\widetilde{c};q)=
P_{n+1}^{A,B}(\widetilde x(s),\widetilde{a},\widetilde{b},\widetilde{c};q)
+\widetilde{\beta}_n^{A,B}P_{n}^{A,B}(\widetilde x(s),\widetilde{a},\widetilde{b},\widetilde{c};q)  
+\widetilde{\gamma}_n^{A,B}P_{n-1}^{A,B}(\widetilde x(s),\widetilde{a},\widetilde{b},\widetilde{c};q),
\end{split}
\end{equation*} 
where
\begin{equation*}
\begin{split}
\widetilde{\beta}_n^{A,B}&=\lim_{N\rightarrow\infty}q^{N+1}\widetilde{a}\frac{q^{a+1}}{c_1}[\beta_n^{A,B}-c_3]=\lim_{N\rightarrow\infty}\Big\{q^{N+1}\widetilde{a}\frac{q^{a+1}}{c_1}[\beta_n-c_3]\\
&-A \Big(q^{N+1}\widetilde{a}\frac{q^{a+1}}{c_1}\frac{C_{n-1}}{C_n}\frac{C_n{u}_n^{\alpha, \beta, A,B}(a)_q C_{n-1}u_{n-1}^{\alpha, \beta }(a)_q}{C_{n-1}^2{d}_{n-1}^2}\\
&-q^{N+1}\widetilde{a}\frac{q^{a+1}}{c_1}\frac{C_{n}}{C_{n+1}}\frac{C_{n+1}{u}_{n+1}^{\alpha, \beta, A,B}(a)_qC_nu_{n}^{\alpha, \beta }(a)_q }{C_n^2{d}_{n}^2}\Big) \\
&-B\Big(q^{N+1}\widetilde{a}\frac{q^{a+1}}{c_1}\frac{C_{n-1}}{C_n}\frac{C_n{u}_n^{\alpha, \beta, A,B}(b-1)_q C_{n-1}u_{n-1}^{\alpha, \beta }(b-1)_q }{C_{n-1}^2{d}_{n-1}^2}\\
&-q^{N+1}\widetilde{a}\frac{q^{a+1}}{c_1}\frac{C_{n}}{C_{n+1}}\frac{C_{n+1}{u}_{n+1}^{\alpha, \beta, A,B}(b-1)_q C_nu_{n}^{\alpha, \beta }(b-1)_q }{C_n^2{d}_{n}^2}\Big)\Big\}, \\
\widetilde{\gamma}_n^{A,B} &=\lim_{N\rightarrow\infty}q^{N+1}\widetilde{a}\frac{q^{a+1}}{c_1}\frac{C_n}{C_{n-1}}\gamma_n^{A,B}= \lim_{N\rightarrow\infty}q^{N+1}\widetilde{a}\frac{q^{a+1}}{c_1}\frac{C_n}{C_{n-1}}\gamma_n\frac{1\!+\!\Delta_n^{A,B}}{1\!+\!\Delta_{n-1}^{A,B}},\\
& \Delta_n^{A,B}=
\frac{ A C_n{u}_n^{\alpha, \beta, A,B}(a)_q C_nu_{n}^{\alpha, \beta }(a)_q }{C_n^2{d}_{n}^2}\!+\!
\frac{B C_n{u}_n^{\alpha, \beta, A,B}(b\!-\!1)_q C_nu_{n}^{\alpha, \beta}(b\!-\!1)_q}{C_n^2{d}_{n}^2}.
\end{split}
\end{equation*}
Notice from \refe{betagammatilde} and $q^{N+1}\widetilde{a}\frac{q^{a+1}}{c_1}\frac{C_{n-1}}{C_n}=1$ that
\begin{equation*}
\begin{split}
\widetilde{\beta}_n^{A,B}&=\widetilde{\beta}_n-A \Big(\frac{P_n^{A,B}(\widetilde{c}q)_q P_{n-1}(\widetilde{c}q)_q}{\widetilde{d}_{n-1}^2}
-\frac{P_{n+1}^{A,B}(\widetilde{c}q)_qP_{n}(\widetilde{c}q)_q }{\widetilde{d}_{n}^2}\Big) \\
&-B \left(\frac{P_n^{A,B}(\widetilde{a}q)_q P_{n-1}(\widetilde{a}q)_q }{\widetilde{d}_{n-1}^2}
-\frac{P_{n+1}^{A,B}(\widetilde{a}q)_q P_{n}(\widetilde{a}q)_q }{\widetilde{d}_{n}^2}\right), \\
\widetilde{\gamma}_n^{A,B} &= \widetilde{\gamma}_n\frac{1\!+\!\widetilde{\Delta}_n^{A,B}}{1\!+\!\widetilde{\Delta}_{n-1}^{A,B}},\,\, \widetilde{\Delta}_n^{A,B}=
\frac{ A P_n^{A,B}(\widetilde{c}q)_q P_{n}(\widetilde{c}q)_q }{\widetilde{d}_{n}^2}\!+\!
\frac{B P_n^{A,B}(\widetilde{a}q)_q P_{n}(\widetilde{a}q)_q}{\widetilde{d}_{n}^2}
\end{split}
\end{equation*}
which are the the coefficients of the TTRR 
for the big $q$-Jacobi-Krall-type polynomials (see \refe{bigqjacTTRRcoeff}).

Since the limit relation from the non-standard $q$-Racah-Krall polynomials to $q$-Hahn polynomials
is quite similar to this one, we will omit it here.

\section*{Concluding remarks}
In the present work we have presented some limit formulas
from the non-standard $q$-Racah polynomials to other families 
of $q$-polynomials of the $q$-Askey scheme \cite{ks} 
and from the $q$-Racah Krall-type polynomials to big $q$-Jacobi-Krall-type polynomials
such that the orthogonality property remains present while the limit is approached.
Also we show that under these limits the TTRR of the non-standard $q$-Racah 
polynomials becomes into the TTRR of the corresponding families of
$q$-Hahn, dual $q$-Hahn and big $q$-Jacobi polynomials. 
Since the non-standard $q$-Racah polynomials $u_n^{\alpha,\beta}(s)_q$ are 
multiples of the standard $q$-Racah polynomials $R_n(x({s-a});q^\beta,q^\alpha,q^{a-b},q^{a+b}|q)$,
$b-a=N$ \cite[page 422]{ks}  (see \cite{RYR} for more details), then
the orthogonality property of  the standard $q$-Racah polynomials \cite[Eq. (14.2.2) page 422]{ks}
becomes into the one of the big $q$-Jacobi polynomials, as well as, the
TTRR of  the standard $q$-Racah polynomials \cite[Eq. (14.2.3) page 423]{ks}
becomes into the TTRR of the big $q$-Jacobi polynomials.  In such a way we have completed 
the work by Koornwinder \cite{TK} and extend it to the non-standard $q$-Racah 
polynomials introduced in \cite{NSU}  as well as to the corresponding Krall-type polynomials obtained 
via the addition of two mass points to the weight function of 
this polynomial \cite{RC,RR}.

\section*{Acknowledgements:} This work was partially supported by MTM2009-12740-C03-02
(Ministerio de Econom\'\i a y Competitividad), FQM-262, FQM-4643, FQM-7276 (Junta de Andaluc\'\i a),
Feder Funds (European Union).  
The second author is supported by a grant from T\"{U}B\.{I}TAK, the Scientific and
Technological Research Council of Turkey. This research has been done during the stay
of the second author at the Universidad de Sevilla. She also
thanks to the Departamento de An\'alisis Matem\'atico of the Universidad de Sevilla
and IMUS for their kind hospitality. 

\bigskip

\end{document}